\newtheorem{lemma}{\bf Lemma}[section]
\newtheorem{theorem}[lemma]{\bf Theorem}
\newtheorem{proposition}[lemma]{\bf Proposition}
\newtheorem{definition}[lemma]{\bf Definition}
\newcommand{\Pz}{{\mathcal P}_2}
\newcommand{\card}{{\mathrm{card}}}
\newcommand{\leqtree}{\leq_{T}}
\newcommand{\leqwo}{\leq_{\mathrm{wo}}}
\begin{document}

%...... title
\title{A weak form of Hadwiger's conjecture}

%...... authors
\author{Dominic van der Zypen}
\address{M\&S Software Engineering, Morgenstrasse 129, CH-3018 Bern,
Switzerland}
\email{dominic.zypen@gmail.com}

%......MSC subject class
\subjclass[2010]{05C15, 05C83}

%...... Abstract...
\begin{abstract}
We introduce the following weak version of Hadwiger's conjecture:
If $G$  is a graph and $\kappa$ is a cardinal such that there is no coloring map
$c:G \to \kappa$, then $K_\kappa$ is a minor of $G$. We prove that this 
statement is true for graphs with infinite chromatic number.
\end{abstract}

%....... main()
\maketitle
\parindent = 0mm
\parskip = 2 mm
\section{Definitions}
In this note we are only concerned with simple
undirected graphs $G = (V, E)$ 
where $V$ is a set and $E \subseteq \Pz(V)$ where
$$\Pz(V) = \big\{ \{x,y\} : x,y \in V\textrm{ and } x\neq y\big\}.$$ 
We denote the vertex set of a graph $G$ by $V(G)$ and the edge set by 
$E(G)$. Moreover, for any cardinal $\alpha$ we denote the complete
graph on $\alpha$ points by $K_\alpha$. 

For any graph $G$, disjoint subsets $S, T \subseteq V(G)$ 
are said to be {\em connected
to each other} if there are $s \in S, t\in T$ with $\{s,t\}\in E(G)$.
Note that $K_\alpha$ is a {\em minor} of a graph $G$ if and only if 
there is 
a collection $\{S_\beta: \beta \in \alpha\}$ of nonempty, connected and 
pairwise disjoint subsets of $V(G)$ such that for all $\beta,\gamma \in \alpha$
with $\beta \neq \gamma$ the sets $S_\beta$ and $S_\gamma$ are connected
to each other. 

Well-founded trees and well-founded tree decompositions as defined in \cite{RoSeTh} 
will be central later on:
\begin{definition}
A {\em well-founded tree} is a non-empty partially ordered set $T=(V,\leq)$
such that for every two elements $t_1, t_2$ their infimum exists and the
set $\{t'\in V: t' < t\}$ is a well-ordered chain for every $t\in V$.
For $t_1, t_2 \in V = V(T)$ we set $$T[t_1,t_2] = \{ t\in V(T): t\geq \inf\{t_1,t_2\}
\textrm{ and } (t \leq t_1 \textrm{ or } t \leq t_2) \}.$$
\end{definition}
\begin{definition}
A {\em well-founded tree-decomposition} of a graph $G$ is a pair $(T, W)$
where $T$ is a well-founded tree and $W: V(T) \to \mathcal{P}(V(G))$ is a map 
such that
\begin{itemize}
\item[\textrm(W1)] $V(G) = \bigcup \mathrm{im}(W)$, and $E(G) \subseteq
\bigcup \{\Pz(W(t)) : t \in V(T)\}$;
\item[\textrm(W2)] if $t'\in T[t_1, t_2]$ then $W(t_1)\cap W(t_2) \subseteq W(t')$;
\item[\textrm(W3)] if $C\subseteq V(T)$ is a chain with $c=\sup C \in V(T)$, 
then $\bigcap\{W(t): t\in C\} \subseteq W(c)$.
\end{itemize}
\end{definition}
Note that (W1) says that every vertex of $G$ is contained in some $W(t)$, and
every edge has both its endpoints in some $W(t)$. 
\begin{definition}
We say that a well-founded tree-decomposition has {\em width} $<\kappa$ if for
every chain $C \subseteq V(T)$ we have
$$\card(\bigcup_{t\in C} \bigcap\{W(t'): t'\in C, t'\geq t\}) < \kappa.$$
\end{definition}
For the singleton chain $C=\{t\}$ this implies $\card(W(t)) < \kappa$ for
every $t\in V(T)$.

%||||||||||||||||||||||||||| section
\section{The weak Hadwiger conjecture}
In \cite{Ha}, Hadwiger formulated his well-known and deep conjecture, 
linking the chromatic number $\chi(G)$ of a graph $G$ with 
clique minors. He conjectured that if $\chi(G) = n \in \mathbb{N}$ 
then $K_n$ is a minor of $G$. However for graphs with infinite 
chromatic number, the conjecture does not hold: in \cite{me} a graph
$G$ is given such that $\chi(G) = \omega$, but $K_\omega$ is not
a minor of $G$.

We consider the following weaker form of Hadwiger's conjecture:
\begin{quote}\textbf{Weak Hadwiger Conjecture.} {\sl Let $G$ be a graph
and $\kappa$ be a cardinal such that there is no coloring map
$c: G\to \kappa$. Then $K_\kappa$ is a minor of $G$.} 
\end{quote}

Note that in the finite case, this statement translates to: if $\chi(G) = n$
then $K_{n-1}$ is a minor of $G$. As of now, it seems to be an open
problem whether the weak Hadwiger conjecture is true in the finite case.

However in the infinite case, we can use
the following structure theorem by Robertson, Seymour, and Thomas:
\begin{theorem} \label{structure} \cite{RoSeTh} Let $\kappa$ be an infinite cardinal
and let $G$ be a graph. Then the following two conditions are equivalent:
\begin{enumerate}
\item $G$ contains no subgraph isomorphic to a subdivision of $K_\kappa$;
\item $G$ admits a well-founded tree-decomposition of width $< \kappa$.
\end{enumerate}
\end{theorem}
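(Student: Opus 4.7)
My plan is to prove the two implications separately. The direction $(2)\Rightarrow(1)$ is an obstruction argument, while $(1)\Rightarrow(2)$ is the constructive direction and is considerably more involved.

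For $(2)\Rightarrow(1)$, I would argue contrapositively: suppose $G$ contains a subgraph $H$ isomorphic to a subdivision of $K_\kappa$, with branch vertices $B=\{b_\alpha:\alpha<\kappa\}$ and internally disjoint paths $P_{\alpha\beta}$ joining $b_\alpha$ to $b_\beta$, and let $(T,W)$ be any well-founded tree-decomposition of $G$. For each $\alpha$, pick $t_\alpha\in V(T)$ with $b_\alpha\in W(t_\alpha)$. The standard tree-decomposition observation is that the path $P_{\alpha\beta}$ must have a vertex in $W(t)$ for every node $t$ on the (unique) $t_\alpha$-to-$t_\beta$ walk in $T$, which in a well-founded tree descends to $\inf(t_\alpha,t_\beta)$ and comes back up. Using infima and well-orderedness of down-chains, I would locate a chain $C\subseteq V(T)$ below which $\kappa$ of the $t_\alpha$'s lie in pairwise incomparable subtrees; then (W2) and the path-crossing property force $\kappa$ vertices into $\bigcup_{t\in C}\bigcap\{W(t'):t'\in C,\, t'\ge t\}$, contradicting width $<\kappa$.

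For $(1)\Rightarrow(2)$, the strategy is a transfinite recursive construction of $(T,W)$. Begin by setting $W(\mathrm{root})$ to be a separator of $G$ of cardinality $<\kappa$. The existence of such a separator is where assumption $(1)$ is used: if every candidate separator had size $\geq\kappa$, then a Menger-type argument would produce $\kappa$ internally disjoint paths between $\kappa$ chosen vertices, giving a topological $K_\kappa$. For each resulting piece of $G$ together with its boundary (itself of size $<\kappa$), recurse; the boundary is incorporated into the next bag below. At limit stages, take suprema of chains in $T$ and define the limit bag so as to fulfil (W3). Throughout, one must track the chain-intersection quantity $\bigcup_{t\in C}\bigcap\{W(t'):t'\in C,\, t'\ge t\}$ rather than just individual bag sizes, since width is defined chain-wise.

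The main obstacle is the limit step of $(1)\Rightarrow(2)$. Greedy local choices of separators do not suffice: at a supremum of a chain, the limit bag must simultaneously satisfy the width bound, the containment (W3), and retain separation properties along every descending chain passing through it. Ensuring this compatibility globally is the technical heart of the Robertson--Seymour--Thomas proof; it relies on a cardinal-sensitive compactness argument whose flavour depends on the cofinality of $\kappa$, and on choosing the separators at each finite stage so as to anticipate these limit constraints rather than merely to fit the local recursion. The existence of arbitrarily nested separators of cardinality $<\kappa$ along chains is precisely what a topological $K_\kappa$ in $G$ would obstruct, which is why the implication closes.
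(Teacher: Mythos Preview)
The paper does not prove Theorem~\ref{structure} at all: it is quoted verbatim from Robertson, Seymour, and Thomas \cite{RoSeTh} and used as a black box to derive the subsequent Proposition. There is therefore no in-paper argument to compare your proposal against.

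As for the proposal itself, what you have written is an outline rather than a proof, and you say as much. The architecture is broadly right---$(2)\Rightarrow(1)$ is an obstruction argument showing that a $TK_\kappa$ forces some chain-intersection to have cardinality $\geq\kappa$, and $(1)\Rightarrow(2)$ is a transfinite construction of $(T,W)$ via small separators---but two places are not yet arguments. In your $(2)\Rightarrow(1)$ sketch, the sentence ``locate a chain $C\subseteq V(T)$ below which $\kappa$ of the $t_\alpha$'s lie in pairwise incomparable subtrees'' is doing real work that you have not justified: in a well-founded tree there is no reason such a chain exists with that property, and the actual argument requires a more careful case analysis on how the $t_\alpha$ are distributed (cofinally along a chain versus branching at a node), together with the path-crossing property you mention. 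In $(1)\Rightarrow(2)$ you explicitly defer the limit step to ``the technical heart of the Robertson--Seymour--Thomas proof,'' which is an honest acknowledgement that the proof is not here. For the purposes of the present paper none of this matters, since the theorem is cited, not proved; but if you intend to supply an independent proof you will need to fill both gaps.
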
 

The strategy is the following. We fix any graph $G$ and cardinal
$\kappa$ and assume that $K_\kappa$ is not a minor of $G$.
Then we construct a $\kappa$-coloring of $G$.

If $K_\kappa$ is not 
a minor of $G$, it is not a topological minor of $G$, which
is equivalent to condition (1) of \ref{structure}. So we apply
theorem \ref{structure} and it remains to prove the following statement:

\begin{proposition} Let $G$ be a graph with a well-founded tree-decomposition
of width $<\kappa$. Then there is a coloring map $c:G\to\kappa$.
\end{proposition}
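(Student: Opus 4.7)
The plan is to assign each vertex $v \in V(G)$ a canonical node $m(v) \in V(T)$, namely the $\leq$-minimum of $T_v := \{t \in V(T) : v \in W(t)\}$, and then color $G$ by transfinite recursion along the well-founded tree $T$. The main point is that at $m(v)$ one can already ``see'' every neighbor of $v$ whose canonical node is $\leq m(v)$, and the tree-decomposition axioms will force this full interaction to sit inside the single small bag $W(m(v))$.

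Concretely, I would first check that $m(v)$ exists. The set $T_v$ is non-empty by (W1). Since the strict predecessors of every node form a well-ordered chain, every non-empty $S \subseteq V(T)$ has a minimal element (fix $s_0 \in S$ and take the minimum of $S \cap \{t : t \leq s_0\}$ inside the well-ordered chain $\{t : t \leq s_0\}$), so $T_v$ has a minimal element $m$. To upgrade minimality to minimum, suppose $m' \in T_v$ were incomparable with $m$; then $t_0 = \inf\{m,m'\}$ is strictly below $m$, lies in $T[m,m']$, and by (W2) contains $v$, contradicting the minimality of $m$. Hence $m = m(v)$ is the minimum of $T_v$. An analogous application of (W2) yields the edge lemma: for every $\{u,v\} \in E(G)$ with $m(u) \leq m(v)$, one has $u \in W(m(v))$. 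Indeed, by (W1) the edge lives in some $W(t_0)$, so $m(u)$ and $m(v)$ both sit in the well-ordered chain $\{t : t \leq t_0\}$ and are therefore comparable; the inclusion $m(v) \in T[m(u), t_0]$ together with $u \in W(m(u)) \cap W(t_0)$ then forces $u \in W(m(v))$ via (W2).

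With these two facts the coloring is routine. Set $V_t = \{v : m(v) = t\}$; the sets $V_t$ partition $V(G)$ and $V_t \subseteq W(t)$. The width hypothesis applied to the singleton chain $\{t\}$ yields $|W(t)| < \kappa$. Well-order each $V_t$ arbitrarily and recur on the well-founded poset $T$: for each $v \in V_t$ in turn, pick any color in $\kappa$ not already used by a previously-colored neighbor. By the edge lemma every such neighbor lies in $W(t) \setminus \{v\}$, a set of size $< \kappa$, so since $\kappa$ is infinite a valid color always exists.

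The principal obstacle is genuinely the existence of a minimum $m(v)$, not merely of a minimal element: without it, the same vertex could be handled in several incomparable branches of the recursion and the bookkeeping would collapse. This is exactly where the infimum structure of $T$ and axiom (W2) are indispensable. The elaborate chain version of the width condition is not needed for this direction, only its singleton-chain consequence $|W(t)| < \kappa$; axiom (W3) likewise plays no role here.
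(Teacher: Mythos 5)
Your proof is correct and follows essentially the same route as the paper's: both assign to each vertex $v$ the canonical least node $m(v)$ whose bag contains it, use (W2) on $T[m(u),t_0]$ to force every relevant neighbour of $v$ into the single bag $W(m(v))$ of size $<\kappa$, and then colour greedily along a well-order refining the tree order. The only cosmetic difference is that you prove directly that $m(v)$ is a genuine $\leq_T$-minimum and verify properness edge by edge, whereas the paper takes the minimum with respect to an auxiliary total well-order extending $\leq_T$ and verifies injectivity of the colouring on each bag.
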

\begin{proof}
It is sufficient to construct a mapping $f: V(G) 
\to \kappa$ such that the restriction $f|_{W(t)}$ is injective for every
$t\in T$: since
every edge lies entirely in some $W(t)$, the function $f$ will be a coloring of $G$.

We set $X := V(G)$. Denote the ordering relation on $T$ by $\leqtree$. 
It is easy to see that $\leqtree$ can be extended to a total
well-ordering $\leqwo$ on $T$. Moreover, for $x \in X$ we define
$$m(x) = \min\{t\in T: x\in W(t)\},$$ where the minimum is
taken with respect to the well-ordering $\leqwo$ on $T$. (Note that the
minimum is taken over a non-empty set since $X = \bigcup_{t\in T}W(t)$.)
For $t\in T$ let $\varphi_t: W(t) \to \card(W(t)) < \kappa$ be a bijection.

Endow $X$ with a total well-ordering relation $\leq_X$ defined by
$$x \leq_X y \Leftrightarrow m(x) <_T m(y) \textrm{ or }
[m(x) = m(y) \textrm{ and } \varphi_{m(x)}(x) \leq \varphi_{m(y)}(y)].$$
We define $f: X\to \kappa$ recursively by
$$f(x) = \min \big( \kappa \setminus \{f(z): z <_X x 
\textrm{ and } z\in W(m(x))\}\big).$$
Note that the minimum above exists since $\kappa > \card(W(t))$ for
all $t\in T$.

It remains to show that for $t_0 \in T$ and $a\neq b \in W(t_0)$ we
have $f(a) \neq f(b)$.
Take any $a<_X b \in W(t_0)$. We consider the tree elements 
$m(a), m(b) \in T$. If $m(a) = m(b)$ then by the very definition of $f$
we get $f(a) \neq f(b)$ directly. 

So suppose that $m(a)\neq m(b)$. If $m(b) \not \leqtree t_0$ then consider
$i=\inf\{m(b), t_0\}$ in the tree. Clearly $i < m(b)$ and
because of axiom (W2) we have $b\in W(m(b))\cap W(t_0) \subseteq 
W(i)$, which contradicts the minimality of $m(b)$. 
Since the same argument can be made for $m(a)$ we get 
$$m(a), m(b) \leqtree t_0.$$

The definition of $\leq_X$ and the fact that
$a <_X b$ and $m(a)\neq m(b)$ jointly imply $m(a) \leqwo m(b)$.
Since predecessors of $t_0$ are linearly ordered in $\leqtree$ we have
$m(a) \leqtree m(b)$ or $m(b) \leqtree m(a)$.
Recall that $\leqwo$ extends $\leqtree$, so we get $m(a) <_T m(b)$.
Therefore $m(b) \in T[m(a), t_0]$ and we can apply axiom (W2) again
to get $$a \in W(m(a))\cap W(t_0) \subseteq W(m(b)).$$ Again we
go back to the recursive definition of $f$: we have 
$f(b)= \min \big( \kappa \setminus \{f(z): z <_X b 
\textrm{ and } z\in W(m(b))\}\big)$, and we get $f(b) \neq f(a)$
from the fact that $a\in W(m(b))$.
\end{proof}

%||||||||||||||||||||||||||| section
\section{Acknowledgements}
I want to thank Robin Thomas for pointing out to me 
that the weak Hadwiger conjecture is implied by the 
structure theorem of \cite{RoSeTh} for graphs with
infinite chromatic number.
%................... bibliography .................
{\footnotesize

}
\end{document}